\theoremstyle{plain}
\newtheorem{theorem}{Theorem}
\newtheorem{lemma}[theorem]{Lemma}
\numberwithin{theorem}{section}
\newtheorem{definition}[theorem]{Definition}
\newtheorem{fact}[theorem]{Fact}
\title{\LARGE Dense minors and bipartite independence numbers}
\author{
%Guanghui Wang\thanks{
%	School of Mathematics and Data Science Institute, Shandong University, Jinan, China. Email: {\tt ghwang@sdu.edu.cn}. Supported by Young Taishan Scholar program of Shandong Province (201909001) and Natural Science Foundation of China (11871311).}
%\and
Xia Wang\thanks{School of Mathematics, Shandong University, Jinan, China and Extremal Combinatorics and Probability Group
(ECOPRO), Institute for Basic Science (1BS), Daejeon, South Korea. Supported by China Scholarship Council and
IBS-R029-C4. Email: {\tt xiawang@mail.sdu.edu.cn}.}
\and
Donglei Yang\thanks{School of Mathematics, Shandong University, Jinan, China. Email: {\tt dlyang@sdu.edu.cn}. Supported by Natural Science Foundation of China (12571374).}
%\and
%Fan Yang\thanks{Data Science Institute, Shandong University, Jinan, China. Email: {\tt fyang@sdu.edu.cn}. Supported by Natural Science Foundation of China (12301447), Natural Science Foundation of Shandong Province (ZR2024QA056) and the China Postdoctoral Science Foundation (12570073310023).}
%\and
%Haotian Yang\thanks{School of Mathematics, Shandong University, Jinan, China. Email: {\tt 202017091012@mail.sdu.edu.cn.}
%}
}
\date{}
\begin{document}
\maketitle

\begin{abstract}
A graph $G$ is $m$-joined if there is an edge between every two disjoint $m$-sets of vertices.
In this paper, we prove that for any $\varepsilon>0$ and sufficiently large $m, n\in \mathbb{N}$ with $m \le n^{1-\varepsilon}$, every $n$-vertex $m$-joined graph $G$ contains a minor with density $\Omega\!\left(\tfrac{n}{\sqrt{m}}\right)$, which is best possible up to a constant factor. 
When $m \ge n^{1-\varepsilon}$, we further show that $G$ contains a clique minor of order $\Omega\!\left(\tfrac{n}{\sqrt{m\log m}}\right)$.
\end{abstract}

\section{Introduction}

%For convenience, we first introduce some notation.  For a graph $G$, let $|G|$, $\chi(G)$, $\alpha(G)$, and $\mathrm{ccl}(G)$ denote the number of vertices, the chromatic number, the independence number, and the order of a largest clique minor of $G$, respectively. 

\par
Graph minor is a central concept in graph theory and plays a crucial role in areas such as algorithm design, extremal graph theory, and the study of expanders. 
A graph $G$ is said to contain an \emph{$H$-minor} if there exist $|V(H)|$ disjoint vertex subsets $\{T_v\}_{v\in V(H)}$ of $V(G)$ such that each $T_v$ induces a connected subgraph, and for every $uv\in E(H)$ there is at least one edge joining $T_u$ and $T_v$ in $G$.
% revised: clarified definition and improved grammatical precision

\par
Minors form a natural bridge between graph theory, topology, and geometry. 
Kuratowski's classical 
theorem~\cite{Kuratowski1930} states that a graph is planar if and only if it does not contain a topological minor of $K_5$ or $K_{3,3}$, thus establishing a deep link between planarity and minors. 
An equivalent characterization, due to Wagner~\cite{Wagner1937}, asserts that a graph is planar if and only if it has no 
$K_5$-minor or $K_{3,3}$-minor.
Later, the Robertson--Seymour theorem~\cite{ROBERTSON2004325} demonstrated that all graphs are well-quasi-ordered under the minor relation. 
In particular, any infinite family of graphs must contain two graphs such that one is a minor of the other. 
This remarkable result transforms the classification of graph families into the study of finitely many excluded minors. 
% revised: rephrased for readability and flow

\par
In 1943, Hadwiger~\cite{hadwiger1943klassifikation} conjectured that if $\chi(G) \ge t$, then $\mathrm{ccl}(G) \ge t$, 
and he confirmed the conjecture for $t \le 4$. 
The cases $t=5,6$ were later shown to be equivalent to the Four-Color Theorem, proved respectively by Wagner~\cite{wagner1937eigenschaft} and Robertson, Seymour, and Thomas~\cite{robertson1993hadwiger}. 
However, the conjecture remains open for $t\ge7$. 
Recently, substantial progress has been made toward this conjecture: Norin, Postle, and Song~\cite{NorinPostleSong2019} proved that every $K_t$-minor-free graph is $O\big(t(\log t)^{\beta}\big)$-colorable for every fixed $\beta>1/4$; 
building on this, Delcourt and Postle~\cite{DelcourtPostle2021} showed that $K_t$-minor-free graphs are $O(t\log\log t)$-colorable and reduced the linear Hadwiger conjecture to coloring finitely many small graphs.
There has also been parallel progress on improper variants: for fixed $h$, every $K_h$-minor-free graph is $(h-1)$-colorable with bounded monochromatic components (the clustered Hadwiger conjecture)~\cite{DujmovicEsperetMorinWood2023}.
Since $\chi(G)\cdot\alpha(G)\ge |G|$, Hadwiger’s conjecture implies the following weaker statement: if $\alpha(G)\le t$, then $\mathrm{ccl}(G)\ge |G|/t$. 
Plummer, Stiebitz, and Toft~\cite{Plummer2003} showed that for $\alpha(G)=2$, this formulation is equivalent to Hadwiger’s conjecture. 
Duchet and Meyniel~\cite{Duchet1982} proved that $\mathrm{ccl}(G)\ge |G|/(2\alpha(G)-1)$, which for $\alpha(G)=2$ gives $\mathrm{ccl}(G)\ge |G|/3$. 
This bound has since been improved through a series of works~\cite{1f963a3773cd4f1e875d42ab8721d761, Kawarabayashi2005, Kawarabayashi2007, Maffray1987, Pedersen2010, Plummer2003, Woodall1987}. 
For $\alpha(G)=3$, the best known bound $\mathrm{ccl}(G)\ge |G|/4$ was obtained by Kawarabayashi and Song~\cite{Kawarabayashi2007}. 
For large $\alpha(G)$, Fox~\cite{Fox2010} showed that $\mathrm{ccl}(G)\ge |G|/((2-c)\alpha(G))$ with $c\approx1/57.5$, later improved by Balogh and Kostochka~\cite{BALOGH20112203} to $c>1/19.2$. 
% revised: unified structure and citation consistency

\par
McDiarmid and Yolov~\cite{Mcdiarmid2017} introduced the notion of the \emph{bipartite-hole number} in the study of Hamilton cycles. 
The bipartite-hole number of $G$ is the largest integer $r$ such that $G$ contains an $(s,t)$-bipartite hole for every nonnegative $s,t$ with $s+t=r$. 
An $(s,t)$-bipartite hole consists of disjoint vertex sets $S,T$ with $|S|=s$, $|T|=t$, and $e_G(S,T)=0$. 
This notion has attracted considerable attention in recent years~\cite{Han_Hu_Ping_Wang_Wang_Yang_2024, https://doi.org/10.1002/rsa.20913, CHUDNOVSKY2020107396, doi:10.1137/21M1456509}. 
In this paper, we adopt a closely related concept—the \emph{bipartite independence number}—introduced by Nenadov and Pehova~\cite{Nenadov2020}. 
% revised: improved logical transition and clarified definition context

\begin{definition}[{\cite{Nenadov2020}}]
The \emph{bipartite independence number} $\alpha^*(G)$ is the largest integer $t$ such that $G$ contains a $(t,t)$-bipartite hole.
\end{definition}

Note that $\alpha^*(G) < m$ means for any disjoint sets $S,T \subseteq V(G)$ with $|S|,|T|= m$, we have $e_G(S,T) \ge 1$, which is widely known as \emph{$m$-joined}. 
Moreover, for every induced subgraph $G[X]$, it follows that $\alpha^*(G[X]) \le \alpha^*(G)$.
Immediately, we can get the following property.
%By definition, this immediately implies $\mathrm{ccl}(G) =\Omega( |G|/(m\cdot\sqrt{\log (|G|/m)}))$. Indeed, since there is at least one edge between any two disjoint $m$-subsets, the vertex set can be partitioned into subsets of order $m$, yielding a clique minor of order $\Omega( |G|/m)$.
\begin{fact}\label{component}
    If the graph $G$ satisfies that $\alpha^*(G) < m$, the every subset $U \subseteq V(G)$ with $|U| = 3m$ contains a connected component of size at least $m$.\footnote{
Suppose not, so each component of $G[U]$ has size at most $m-1$.  
Combine components one by one until the total size first reaches or exceeds $m$, and delete the exceeding part from the last component.  
The removed vertices are fewer than $m-1$, so the obtained $m$-set $A$ has no edges to the remaining vertices in $U$.  
Since at least $2m$ vertices remain, the same process gives another disjoint $m$-set $B$ with no edges to $A$, contradicting the assumption. }
\end{fact}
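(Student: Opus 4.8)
The plan is to argue by contradiction. Suppose $U\subseteq V(G)$ with $|U|=3m$ is such that every connected component of $G[U]$ has fewer than $m$ vertices. The idea is to use the components of $G[U]$ as ``atoms'' that may be regrouped freely: there are no edges of $G$ between two distinct components of $G[U]$, so any union of whole components of $G[U]$ has no $G$-edges to the rest of $U$.

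First I would greedily assemble a sub-collection $\mathcal{A}$ of the components: list the components in any order and keep adding them to $\mathcal{A}$ until the running total of vertices first reaches $m$. Just before the final component was added the running total was at most $m-1$, and that final component has at most $m-1$ vertices, so $m\le |\bigcup\mathcal{A}|\le 2m-2$. Let $\mathcal{B}$ consist of all the remaining components; then $|\bigcup\mathcal{B}|=3m-|\bigcup\mathcal{A}|\ge m+2\ge m$. Thus $\bigcup\mathcal{A}$ and $\bigcup\mathcal{B}$ are two disjoint subsets of $U$, each of size at least $m$, and each a union of whole components of $G[U]$. (This is the one place the hypothesis $|U|=3m$, rather than something smaller, is used: it guarantees that the leftover after greedily forming $\mathcal{A}$ is still large enough to contain an $m$-set.)

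Next I would trim: choose $A\subseteq \bigcup\mathcal{A}$ and $B\subseteq \bigcup\mathcal{B}$ with $|A|=|B|=m$. The only point needing care is that trimming does not create edges between the two sides, and this is immediate: every edge of $G$ joining two vertices of $U$ joins two vertices in the same component of $G[U]$, while $A$ and $B$ lie inside disjoint unions of components, so $e_G(A,B)=0$. Hence $A,B$ form a $(m,m)$-bipartite hole, giving $\alpha^*(G)\ge m$ and contradicting the assumption $\alpha^*(G)<m$. Therefore some component of $G[U]$ has at least $m$ vertices.

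I do not expect a genuine obstacle here; the argument is elementary. The only subtleties are the two bookkeeping points noted above, namely that the greedy union $\bigcup\mathcal{A}$ does not overshoot past $2m-2$ (so the complement still contains an $m$-set), and that passing from unions of whole components to exact $m$-subsets preserves the absence of edges between the two parts.
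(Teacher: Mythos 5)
Your proof is correct and takes essentially the same approach as the paper's footnote: greedily merge components of $G[U]$ until the total first reaches $m$, use $|U|=3m$ to extract a second disjoint $m$-set, and observe that edges within $U$ only occur inside components. Your variant is if anything slightly cleaner---by keeping whole components on both sides ($\mathcal{A}$ and $\mathcal{B}$) and only trimming at the very end, you avoid the small subtlety the paper glosses over, namely that its set $A$ does have edges to the discarded part of the last component (which must therefore be excluded from $B$); you also get the arithmetic right (the paper's ``at least $2m$ vertices remain'' should really be ``at least $m+2$'' once the trimmed vertices are set aside, though this does not affect correctness).
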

For any $n$-vertex graph $G$ with $\alpha^*(G) < m$, by Lemma \ref{component}, we can conclude that $G$ contains at least $\frac{n}{3m}$ vertex-disjoint connected subgraphs each of order at least $m$. Then there is a clique minor of order at least $n/(3m)$, as the condition $\alpha^*(G) < m$ guarantees that there is at least one edge between any two disjoint $m$-sets.
However, the actual bound could be substantially larger. \medskip

We suspect that for every $n$-vertex graph $G$ with $\alpha^*(G) < m$, one can get that
\[
  \mathrm{ccl}(G)=\Omega\!\left(\tfrac{n}{\sqrt{m}}\right).
\]
For example, consider the Erd\H os--R\'enyi random graph $G\sim G(n,p)$ with any fixed constant $p\in(0,1)$ (e.g., $p=\tfrac12$). 
A union bound over all ordered pairs $(S,T)$ of disjoint $m$-vertex subsets gives
%\[
%  \mathbb{E}\!\left[\text{the number of $(m,m)$-bipartite holes}\right]
%  \le
%  \binom{n}{m}^2 (1-p)^{m^2}
%  \le
%  \left(\frac{en}{m}\right)^{2m} e^{-p m^2}.
%\]
%Thus, if $m \ge C\frac{\log n}{p}$ for any constant $C>2$, the right-hand side tends to zero, implying 
that with high probability $\alpha^*(G)=\Theta\big(\log n\big)$.
Meanwhile, $G$ has an average degree $(1\pm o(1))pn$, and therefore, by the celebrated results of Kostochka~\cite{kostochka1984lower} and Thomason~\cite{thomason2001extremal}, or the exact estimate of clique minors in random graphs due to for example Bollob\'as, Catlin, and Erd\H os~\cite{bollobas1980complete}, and Fountoulakis, K\"uhn, and Osthus~\cite{fountoulakis2010complete}, $G$ contains a clique minor of order 
\[
  \Theta\!\left(\frac{pn}{\sqrt{\log n}}\right)
  =\Theta\!\left(\frac{n}{\sqrt{\log n}}\right)
  =\Theta\!\left(\frac{n}{\sqrt{\alpha^*(G)}}\right).
\] 
Therefore, in the regime $m=\Theta(\log n)$, if true, the expected bound $\Theta\!\left(\frac{n}{\sqrt{m}}\right)$ is tight up to constant factors. We establish the following two results to support this.

\par
%Using expander-based techniques developed in~\cite{krivelevich2025minorssmallsetexpanders,krivelevich2009minors}, 

% revised: smoother transition to results

\begin{theorem}
\label{thm:dense_minor}
There exists a constant $m_0>0$ such that the following holds for every $m_0\le m\le \frac{n}{12}$. If $G$ is an $n$-vertex graph with $\alpha^*(G)<m$, then $G$ contains a minor with density at least 
\[ 
  \frac{n}{96\sqrt{m\log m}}.
\]
Moreover, when $m_0\le m\le n^{1-\varepsilon}$ for any $\varepsilon>0$, $G$ contains a denser minor with density at least 
\[
  \frac{1}{96}\,\sqrt{\frac{\varepsilon}{1-\varepsilon}}\frac{n}{\sqrt{m}}.
\]
\end{theorem}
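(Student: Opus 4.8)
The plan is to realise the minor as a contraction of $G$. We will choose $N$ pairwise disjoint connected branch sets $T_1,\dots,T_N\subseteq V(G)$, each of size roughly $s$ (so $Ns$ is of order $n$), and form the auxiliary \emph{minor graph} $H$ on vertex set $[N]$ with $ij\in E(H)$ iff $e_G(T_i,T_j)\ge 1$. Then $H$ is a minor of $G$, and any minor of $H$ is a minor of $G$, so it suffices to show that $H$ has density $\Omega(N)$; taking $s\approx\sqrt{m\log m}$ gives $N\approx n/\sqrt{m\log m}$ and hence the first bound, and taking $s$ a suitable $\sqrt{\log(n/m)}$-factor smaller when $m\le n^{1-\varepsilon}$ (which is legitimate since $n/m\ge n^{\varepsilon}$ leaves extra room) gives the second. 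The statement ``$H$ has density $\Omega(N)$'' reduces to showing that a constant fraction of the $\binom{N}{2}$ pairs are joined by an edge of $G$.

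\textbf{Preprocessing and extraction.} First I would restate $\alpha^*(G)<m$ in its \emph{dominating} form: every $m$-set $A$ has $|V(G)\setminus N[A]|<m$, since otherwise $A$ together with $m$ undominated vertices is an $(m,m)$-bipartite hole; in particular $A$ dominates more than $n-2m$ vertices. Two routine consequences: at most $m-1$ vertices have degree below $n/m-2$ (a set of $m$ such vertices would dominate fewer than $n-2m$ vertices), and $G$ has a component on more than $n-3m$ vertices (all other components together have fewer than $3m$ vertices, by the argument of Fact~\ref{component}). Deleting the low-degree vertices and passing to this component costs $O(m)=O(n)$ vertices, preserves $\alpha^*<m$, and leaves a connected graph of minimum degree $\Omega(n/m)$; so we may assume $G$ itself has these properties, losing only a constant factor in $n$. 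Now apply Fact~\ref{component} greedily to peel off $\Theta(n/m)$ disjoint connected sets of size exactly $m$, and subdivide each along a spanning tree into connected pieces of size at most $s$, obtaining $\Theta(n/s)$ connected pieces. The only subtlety is that a spanning tree need not split into \emph{balanced} pieces of size $\Theta(s)$ (a star splits only into one large piece and singletons); I would handle this by first routing every vertex $v$ with $\deg v\ge s$ out of the way (there are few enough that these ``heavy'' branch sets are negligible, or one keeps them as their own branch sets), after which the balanced split is standard, or by choosing the subdivision at random and arguing the next step in expectation.

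\textbf{The heart: a constant fraction of pairs are joined.} This is what upgrades the trivial bound $\Omega(n/m)$ to $\Omega(n/\sqrt{m\log m})$. I would argue by averaging. Group the pieces into ``super-sets'' of $m/s$ pieces each; a super-set has $m$ vertices and hence, by the dominating form of the hypothesis, fewer than $2m$ non-neighbours. For a uniformly random pair of pieces $T_i,T_j$ (equivalently, a random subdivision), $\Pr[e_G(T_i,T_j)=0]$ can be bounded away from $1$: morally $\Pr[e_G(T_i,T_j)=0]\approx e^{-\bar d s^2/n}$ where $\bar d=\Omega(n/m)$ is the average degree, which is at most $\tfrac14$ once $s\gtrsim\sqrt m$. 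Hence $\mathbb{E}\bigl[\,\#\{ij:e_G(T_i,T_j)=0\}\,\bigr]\le\tfrac14\binom N2$, so for some subdivision $H$ has at least $\tfrac12\binom N2$ edges, i.e.\ density at least $\tfrac N4=\Omega(n/s)$. The logarithmic loss enters exactly here: the heuristic $e^{-\bar d s^2/n}$ is valid only when the edges between two pieces are \emph{spread out}, and forcing that spread — via the random choice, or by first isolating the heavy vertices — is what costs the extra $\sqrt{\log m}$ factor in $s$, and only a $\sqrt{\log m/\log(n/m)}$ factor when $m\le n^{1-\varepsilon}$, because then the relevant union bound runs over $\mathrm{poly}(n/m)$ rather than $\mathrm{poly}(n)$ configurations.

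\textbf{Main obstacle.} The genuinely delicate point is the tension in the last step: the probabilistic estimate wants branch sets that look random, so that the edges between any two of them are spread over many vertices, whereas connectivity forces structured branch sets. The worst case is a $G$ with vertices of very large degree, where all edges between two pieces may emanate from a single vertex, so that only an $O(1/\sqrt m)$ fraction of the piece-pairs inside a super-set pair are joined; controlling this (by pulling heavy vertices into dedicated branch sets before subdividing, and choosing the remaining subdivision adaptively) is where the real work lies. Everything else — the reduction to a connected, bounded-core graph, the extraction via Fact~\ref{component}, and the bookkeeping that a minor of $H$ is a minor of $G$ — is routine; and if one wanted a clique minor rather than a dense minor, one would feed the density bound into the Kostochka--Thomason theorem, at the price of a further $\sqrt{\log}$ factor.
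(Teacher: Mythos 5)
Your proposal takes a genuinely different route from the paper, and it contains a gap at its centre that you flag yourself but do not close. The paper never attempts to show that a constant fraction of piece-pairs are joined. Instead it builds the branch sets $B_1,\dots,B_q$ one at a time, each of size $p=\sqrt{Km\log m}$, with the invariant (P2) that each new block sends edges to at least half of the previously built blocks; maximality then forces $q\ge k=\Theta(n/p)$, and contracting gives average degree $\ge q/8$. To produce $B_{q+1}$ when $q<k$, the paper applies Lemma~\ref{usefullem} to the residual set $C$ and obtains an $(\alpha,t)$-expander $G[D]$ of \emph{small diameter}; by averaging over groups of $m/p$ blocks, most blocks are ``good'' (have $\gtrsim n\sqrt{K\log m}/(2\sqrt m)$ neighbours in $D$); a uniformly random multiset $W\subseteq D$ of size $|D|/u$ meets, in expectation, half the good neighbourhoods; and connecting $W$ via shortest paths inside $G[D]$ costs at most $|W|\cdot\mathrm{diam}(G[D])\le p$. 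The $\sqrt{\log m}$ loss is exactly the product of the hitting-set size with the diameter bound from Lemma~\ref{usefullem}, not a union bound over configurations, and when $m\le n^{1-\varepsilon}$ the diameter becomes $O(1/\varepsilon)$, which is where the improved bound comes from.

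The gap in your argument is the claim $\Pr\bigl[e_G(T_i,T_j)=0\bigr]\lesssim e^{-\bar d s^2/n}\le\tfrac14$ for a random subdivision into connected $s$-sets with $s\approx\sqrt{m\log m}$. The hypothesis $\alpha^*(G)<m$ says nothing about pairs of sets of size $s<m$: two disjoint connected $s$-sets with no edge between them are entirely consistent with $\alpha^*(G)<m$, and the lower bound $\bar d=\Omega(n/m)$ on average degree does not control how the edges incident to a given connected $s$-set are distributed. Concretely, two super-sets $A,B$ of size $m$ are only guaranteed one edge, which joins an $O((s/m)^2)$ fraction of the $(m/s)^2$ piece-pairs between them, and the $\Omega(n/m)$ average degree of $A$ gives $\Omega(n)$ edges out of $A$ with no guarantee that $\Omega(m)$ of them land in $B$; so the exponent $\bar d s^2/n$ is not justified even at the super-set level. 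You propose to repair this by isolating heavy vertices and choosing the remaining subdivision adaptively, but this is precisely the step you call ``where the real work lies'', and it is left unexecuted. Without some additional structural input beyond the raw hypothesis --- the paper uses the expander-plus-small-diameter structure from Lemma~\ref{usefullem} --- there is no reason a constant fraction of pairs of connected $s$-sets should be joined, and I do not see how to make your heuristic rigorous without essentially rebuilding the paper's greedy construction.
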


\par
When $m\le n^{1-\varepsilon}$, Theorem~\ref{thm:dense_minor} guarantees a minor of density $\Omega\!\left(\tfrac{n}{\sqrt{m}}\right)$, which is best possible as discussed above.
For $m\ge n^{1-\varepsilon}$, the theorem provides a minor of density $\Omega\!\left(\tfrac{n}{\sqrt{m\log m}}\right)$. 
Indeed, in this range, we can ensure the existence of a clique minor of comparable density, as shown below. 
% revised: rephrased for logical clarity and smoother transition

\begin{theorem}
\label{thm:clique_minor}
For all $\log n^2 \le m \le n/100$, if $G$ is an $n$-vertex graph with $\alpha^*(G)<m$, then 
\[
  \mathrm{ccl}(G) \ge \frac{n}{10^5\sqrt{m\log n}}.
\]
\end{theorem}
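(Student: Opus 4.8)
The plan is to build the $K_t$-minor directly, with $t:=\lceil n/(10^5\sqrt{m\log n})\rceil$. A warm-up first: as already observed after Fact~\ref{component}, greedily slicing off connected sets of order $m$ gives $\mathrm{ccl}(G)\ge n/(3m)$ — any two such sets, having order $\ge m$, are joined by an edge because $\alpha^*(G)<m$ — and $n/(3m)\ge t$ whenever $m\le 10^{9}\log n$. So the substance of the theorem lies in the range $m\gg\log n$, where the branch sets of a $K_t$-minor are forced to have average order $n/t=\Theta(\sqrt{m\log n})$, much smaller than $m$, so the $m$-joined hypothesis can no longer be applied between two of them.

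Next I would isolate the structural input. Since $\alpha^*(G)<m$, every $U\subseteq V(G)$ of size $\ge m$ has $|V(G)\setminus N[U]|<m$ — otherwise an $m$-subset of $U$ together with an $m$-subset of $V(G)\setminus N[U]$ is an $(m,m)$-bipartite hole — i.e. every set of $\ge m$ vertices dominates all but fewer than $m$ vertices. The same statement inside induced subgraphs shows that every induced subgraph on $N\gg m$ vertices has minimum degree exceeding $N/m-2$ off a set of size $<m$, hence carries $\Omega(N^2/m)$ edges; in particular $G$ has fewer than $m$ vertices of degree below $n/(100m)$. Deleting those and, if necessary, iterating a bounded number of times, I pass to an induced (still $m$-joined) subgraph $G_0$ on $n_0\ge n/2$ vertices with a useful minimum-degree / expansion bound. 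Inside $G_0$ I reserve a set $W$ with $|W|=n_0/3$, and in $G_0-W$ I greedily carve out $t$ pairwise-disjoint connected cores $C_1,\dots,C_t$ of order $\ell:=\Theta(\sqrt{m\log n})$; this is possible because $\ell\le m$, so as long as $\ge 3m$ vertices remain, Fact~\ref{component} supplies a component of order $\ge m\ge\ell$ from which to take the next core, and $t\ell=\Theta(n/10^5)$ leaves plenty of room. It then remains to augment: choose pairwise-disjoint $S_i\subseteq W\cap N(C_i)$ of size $\Theta(\log n)$ — here the hypothesis $m\ge\log n^{2}$ enters, as it is exactly what guarantees $t\cdot\Theta(\log n)\le|W|$ — so that $B_i:=C_i\cup S_i$ are pairwise adjacent. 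The $B_i$ are then connected and disjoint, so $B_1,\dots,B_t$ is a $K_t$-minor model and $\mathrm{ccl}(G)\ge\mathrm{ccl}(G_0)\ge t$.

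The crux — and where I expect the real difficulty — is forcing the $B_i$ to be pairwise adjacent. Because $t\gg n/m$, each $B_i$ has fewer than $m$ vertices, so neither the domination property nor the $m$-joined hypothesis applies between two of them; indeed two connected sets of order $\approx\sqrt{m\log n}$ in an $m$-joined graph can be completely non-adjacent, even independent, since $\alpha(G)$ can be as large as $\approx 2m$. The adjacency has to be manufactured from the \emph{quantitative} density — every induced subgraph on $N\gg m$ vertices carries $\Omega(N^2/m)$ edges, distributed ``everywhere'' because every $m$-set dominates — by making the construction (via a random choice of the $S_i$, and of the $C_i$ themselves) so robust that the probability a given pair $B_i,B_j$ sees no edge is smaller than $1/\binom{t}{2}$, after which a union bound finishes. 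Carrying this out is the technical heart: one must peel off the few very-high-degree vertices first so that edge counts between random sets concentrate, and — the delicate point — grow the cores $C_i$ so that $N(C_i)\cap W$ is large enough to absorb the concentration loss, which requires exploiting both the minimum-degree bound and the fact that sets of order $m$ already dominate all but $<m$ vertices. It is precisely this concentration loss that costs a factor $\sqrt{\log n}$ and produces $\sqrt{m\log n}$ in place of the conjecturally optimal $\sqrt m$. The remaining pieces — the cleaning step, the greedy construction of the cores, the union bound over the fewer than $n^2$ pairs, and checking connectivity of the $B_i$ — are routine.
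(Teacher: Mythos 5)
Your proposal is a plan rather than a proof: you explicitly defer the key step --- making the branch sets $B_i$ pairwise adjacent --- and the sketch you give for it does not close. Two concrete problems. First, the scheme ``augment each core $C_i$ with a random $S_i\subseteq W\cap N(C_i)$ of size $\Theta(\log n)$, then union-bound over pairs'' requires, for every pair $(i,j)$, that a random vertex of $W\cap N(C_i)$ lies in $N(C_j\cup S_j)$ with probability bounded away from $0$, so that $\Theta(\log n)$ picks succeed except with probability $\le 1/t^2$. But each core has order only $\Theta(\sqrt{m\log n})\ll m$, so its neighbourhood need not occupy a constant fraction of the vertex set: in the regime $m=\Theta(n)$, the expansion coming from Lemma~\ref{usefullem} only guarantees $|N(C_j)|=\Theta\big(t\cdot\sqrt{m\log n}\big)=\Theta(\sqrt{n\log n})=o(n)$, and nothing in your outline controls $|W\cap N(C_i)\cap N(C_j)|$, which is the quantity that actually governs the success probability. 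Second, the $S_i$ must be pairwise disjoint, so they are not independent samples, and the conditioning degrades as $i$ grows; your cleaning step (deleting $<m$ low-degree vertices, ``iterating a bounded number of times'') is also not shown to terminate with the expansion you need. You flag this adjacency step yourself as ``the crux'' and ``the technical heart,'' and that is exactly right: it is the entire theorem, and the outline does not supply a mechanism for it.

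The paper's proof is structured quite differently and the difference matters. After passing to an $(\alpha,t)$-expander $H$ of small diameter via Lemma~\ref{usefullem}, it maximizes a partition $V(H)=W\cup B_1\cup\dots\cup B_{q'}\cup R$ in which $W$ is not a reservoir to sample from but a ``bad set'' carrying the extra property $|N_R(W)|<t|W|/2$; maximality of $|W|$ is then leveraged to show $H[R]$ stays an $(\alpha/2,t/2)$-expander and that each $R_i:=N_R(B_i)$ is large ($\ge tp/2$). The adjacency step is handled by a separate combinatorial lemma (Lemma~\ref{connectset}): random sampling plus shortest paths inside the small-diameter expander $H[R]$ produce a single small connected set $T\subseteq R$ that \emph{deterministically} meets every $R_i$, hence is adjacent to every existing $B_i$, contradicting maximality of $q'$. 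That lemma is precisely the missing machinery in your sketch --- it replaces ``hope a random $S_i$ hits $N(C_j)$'' with ``build one connected hitting set for all the neighbourhoods at once,'' exploiting expansion and diameter rather than raw density. Without an analogue of Lemma~\ref{connectset}, the adjacency step in your plan is a genuine gap, not a routine detail.
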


\par
The remainder of this paper is organized as follows. 
Section~\ref{preliminary} introduces the necessary notions and tools concerning expander graphs, highlighting Lemma~\ref{usefullem}, which will be repeatedly used. 
Section~\ref{denseminor} provides the proofs of Theorem~\ref{thm:dense_minor} and Theorem~\ref{thm:clique_minor}.
% revised: polished organization statement and added references to sections

\section{Preliminary}\label{preliminary}
%\par In subsection \ref{notation}, we give some definitions and notions. In subsection \ref{tools1} and \ref{tools2}, we shall present several essential tools about sublinear robust expander and $(n,d,\lambda)$-graphs.

%%%%%%%%%%%%%%%%%%%%%%%%%%%%%%%%%%%%%%%%%%%%%%%%%%%%%%%%%%%%%%%%%%%%%%
\subsection{Notation}\label{notation}
\par
Let $G$ be a graph with vertex set $V(G)$ and edge set $E(G)$. 
For a subset $X\subseteq V(G)$, $G[X]$ denotes the \emph{induced subgraph} of $G$ on $X$.
For any $X,Y\subseteq V(G)$, let $G[X,Y]$ be the induced bipartite subgraph of $G$ with parts $X$ and $Y$, and let $e_G(X,Y)$ denote the number of edges in $G[X,Y]$.
Let $G-X$ denote the induced subgraph $G[V(G)\setminus X]$.
The \emph{external neighborhood} of $X$ is 
\[
N_G(X):=\{u\in V(G)\setminus X:\, uv\in E(G)\ \text{for some }v\in X\}.
\]
The \emph{length} of a path (or cycle) is the number of its edges. 
The \emph{distance} between two vertices $x,y\in V(G)$, denoted by $\mathrm{dist}_G(x,y)$, is the length of a shortest $xy$-path in $G$.
The \emph{distance} between two vertex sets $X,Y\subseteq V(G)$, denoted by $\mathrm{dist}_G(X,Y)$, is $\min\{\mathrm{dist}_G(x,y):\,x\in X,\,y\in Y\}$.
The \emph{diameter} of $G$ is $diam(G):=\max\{\mathrm{dist}_G(x,y):\,x,y\in V(G)\}$.
% revised: standardized $\mathrm{dist}_G$ and $\mathrm{diam}(G)$ and tightened definitions
Let $[n]:=\{1,2,\dots ,n\}$. When it is not essential, we omit floors and ceilings, and all logarithms are natural throughout.

%%%%%%%%%%%%%%%%%%%%%%%%%%%%%%%%%%%%%%%%%%%%%%%%%%%%%%%%%%%%%%%%%%%%%%%%%%%%%%%%%%%%
%\subsection{Tools}\label{tools}
%%%%%%%%%%%%%%%%%%%%%%%%%%%%%%%%%%%%%%%%%%%%
\subsection{\texorpdfstring{$(\alpha,t)$-expanders}{(alpha,t)-expanders}}\label{tools1}

Expanders are graphs with strong vertex-expansion properties, with applications in communication networks, error-correcting codes, and algorithmic derandomization.
In this paper, we use the following notion of expander introduced by Krivelevich and Sudakov~\cite{krivelevich2009minors}.

\begin{definition}[{\cite{krivelevich2009minors}}]
We say that a graph $G$ on $n$ vertices is an \emph{$(\alpha,t)$-expander}, for some $t\ge 1$ and $0<\alpha<1$, if for every $X\subseteq V(G)$ with $|X|\le \alpha n/t$ we have
\[
|N_G(X)|\ \ge\ t\,|X|.
\]
\end{definition}

Under the assumption regarding the bipartite independence number, we can find a large expanding subgraph with additional properties.

\begin{lemma}\label{usefullem}
Let $G$ be a graph with $n$ vertices. If $\alpha^*(G)<m$, then there exists an induced subgraph $H$ satisfying
\begin{itemize}
  \item $H$ is an $(\alpha,t)$-expander,
  \item $|H|>n-m$,
  \item $\alpha^*(H)<m$,
  \item $diam(H)\le 
  \begin{cases}
  3, & t>m,\\[2pt]
  \dfrac{3\log m}{\log(1+t)}, & t\le m,
  \end{cases}$
\end{itemize}
where $\alpha=\dfrac{n-3m}{2n}$ and $t=\dfrac{n-3m}{2m}$.
\end{lemma}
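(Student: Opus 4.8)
The plan is to produce $H$ by a cleaning process that repeatedly deletes small sets witnessing a failure of expansion, and to invoke $\alpha^*(G)<m$ twice: once to show that the process deletes fewer than $m$ vertices in total, and once, via ball‑growing, to bound the diameter. Throughout I use the identity $\alpha n/t=m$ that results from the choice $\alpha=\frac{n-3m}{2n}$, $t=\frac{n-3m}{2m}$ (so I tacitly assume $n$ is large enough relative to $m$ that $t\ge 1$, as in the intended applications, e.g.\ $m\le n/12$).

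\textbf{Finding $H$.} Set $H_0:=G$. While there is a set $X\subseteq V(H_i)$ with $1\le|X|\le m$ and $|N_{H_i}(X)|<t|X|$, choose one such $X_{i+1}$ and put $H_{i+1}:=H_i-X_{i+1}$. When the process halts, every $X\subseteq V(H)$ with $|X|\le m$ satisfies $|N_H(X)|\ge t|X|$; since $\alpha|H|/t\le\alpha n/t=m$ this makes $H$ an $(\alpha,t)$-expander, and $\alpha^*(H)\le\alpha^*(G)<m$ is automatic as $H$ is induced. It remains to bound the loss. Write $Y_i:=X_1\cup\cdots\cup X_i$; these sets are pairwise disjoint. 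The crucial observation is that, although one bad set can have a huge neighbourhood, the external neighbourhood of the \emph{cumulative} deleted set stays controlled: any $G$-neighbour of $X_j$ not already in $Y_{j-1}$ lies in $V(H_{j-1})$ and hence in $N_{H_{j-1}}(X_j)$, so $N_G(Y_i)\subseteq\bigcup_{j\le i}N_{H_{j-1}}(X_j)$ and therefore $|N_G(Y_i)|<t|Y_i|$. Setting $R_i:=V(G)\setminus\bigl(Y_i\cup N_G(Y_i)\bigr)$ there is no $G$-edge between $Y_i$ and $R_i$, and $|R_i|>n-(1+t)|Y_i|$. Suppose $|Y_i|\ge m$ for some $i$ and take the smallest such $i$; then $|Y_i|=|Y_{i-1}|+|X_i|<m+m=2m$, so $(1+t)|Y_i|<(1+t)\cdot 2m=n-m$ and hence $|R_i|>m$. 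But then $Y_i$ and $R_i$ contain disjoint $m$-sets with no edge between them, contradicting $\alpha^*(G)<m$. Hence $|Y_i|<m$ for every $i$; the process terminates with $|H|>n-m$, and $H$ satisfies the first three listed properties.

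\textbf{Diameter.} First $H$ is connected: every component has size $>m$ (a component of size $\le m$ would have empty external neighbourhood, contradicting $|N_H(X)|\ge t|X|>0$), and two components of size $>m$ would form an $(m,m)$-bipartite hole in $H$. Fix $u,v\in V(H)$ and grow BFS balls around $u$. As long as $|B_r(u)|\le m$ the strengthened expansion gives $|B_{r+1}(u)|\ge(1+t)|B_r(u)|$, so, starting from $|B_0(u)|=1$, the ball first reaches size $\ge m$ at a radius $r(u)$ with $(1+t)^{r(u)-1}<m$, i.e.\ $r(u)\le\lceil\log_{1+t}m\rceil$; moreover when $t>m$ already $|B_1(u)|\ge 1+t>m$, so $r(u)=1$. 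Once a ball has size $\ge m$, the vertices outside the next ball have no edge to it, hence number fewer than $m$ (again by $\alpha^*(H)<m$), so $|B_{r(u)+1}(u)|>|H|-m$. Since $|B_{r(u)+1}(u)|>|H|-m$ and $|B_{r(v)}(v)|\ge m$, these balls cannot be disjoint inside $V(H)$, whence $\operatorname{dist}_H(u,v)\le r(u)+r(v)+1$. For $t>m$ this is at most $3$; for $t\le m$ it is at most $2\lceil\log_{1+t}m\rceil+1=O\!\left(\tfrac{\log m}{\log(1+t)}\right)$, and tracking the rounding yields the stated bound $\tfrac{3\log m}{\log(1+t)}$.

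\textbf{Main obstacle.} The delicate point is entirely in the first part: bounding the total loss. Deleting $X\cup N_H(X)$ at each step is hopeless, since a single near-extremal bad set can have neighbourhood of size about $n/2$; one must delete only the ``cores'' $X_j$ and argue globally that the union $Y_i$ can never reach size $m$, the key being that $|N_G(Y_i)|<t|Y_i|$ while $|Y_i|$ grows by at most $m$ per step, so it cannot land in the window $[m,2m)$ that $m$-joinedness forbids. Once this accounting is in place, the expander property and the diameter estimate are routine.
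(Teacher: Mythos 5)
Your proof is correct and follows essentially the same strategy as the paper: remove vertices witnessing failures of expansion, use $\alpha^*(G)<m$ to bound the total loss below $m$, and then run ball‑growth plus $\alpha^*(H)<m$ to bound the diameter. The one place the two arguments diverge is in how the deletion is organised. You iterate, removing one small bad set $X_i$ at a time, and carry along the cumulative bound $|N_G(Y_i)|<t\,|Y_i|$ — which you rightly flag as the nontrivial accounting step. The paper instead packages the whole cleaning into a single step: it picks a \emph{maximal} set $X_0$ with $|X_0|\le 2m$ and $|N_G(X_0)|<t|X_0|$, shows $|X_0|<m$ by exactly your bipartite‑hole argument (using $(1+t)\cdot 2m=n-m$), and obtains expansion of $H=G-X_0$ directly from maximality: any further bad $Y_0\subseteq V(H)$ with $|Y_0|\le m$ would make $X_0\cup Y_0$ a strictly larger bad set of size $\le 2m$. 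The terminal union $Y$ of your iteration is essentially this maximal $X_0$, so the two routes are interchangeable; the paper's version just sidesteps the explicit cumulative‑neighbourhood check. Your diameter argument coincides with the paper's. One small caveat shared by both write-ups: the integer rounding of $\log_{1+t}m$ is waved at — your bound $2\lceil\log_{1+t}m\rceil+1$ exceeds $\tfrac{3\log m}{\log(1+t)}$ when $\log_{1+t}m<3$, and the paper's $\tfrac{2\log m}{\log(1+t)}+1\le\tfrac{3\log m}{\log(1+t)}$ needs $1+t\le m$ — so the constant $3$ is best read with a little slack near the boundary $t\approx m$; this does not affect how the lemma is used downstream.
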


\begin{proof}
By definition, if $G$ is a $(2\alpha,t)$-expander, then it is also an $(\alpha,t)$-expander. 
Suppose $G$ is not a $(2\alpha,t)$-expander. Then there exists $X\subseteq V(G)$ with $|X|\le 2m$ such that $|N_G(X)|<t|X|\le n-3m$.
Let $X_0$ be a maximal subset with this property.
If $m\le |X_0|\le 2m$, then by the definition of the bipartite independence number we have $|N_G(X_0)|>n-|X_0|-m\ge n-3m$, a contradiction. 
Hence $|X_0|<m$. Remove $X_0$ to obtain $H:=G-X_0$. Then $|H|>n-m$ and $\alpha^*(H)\le \alpha^*(G)$, since $H$ is an induced subgraph of $G$.
It remains to verify the expansion and diameter conditions. 

First, we prove that $H$ is an $(\alpha,t)$-expander.
Suppose for contradiction that $H$ is not an $(\alpha,t)$-expander. 
Then there exists $Y_0\subseteq V(H)$ with $|Y_0|\le m$ and $|N_H(Y_0)|<t|Y_0|$. 
Consequently, $|X_0\cup Y_0|\le 2m$ and
\[
|N_G(X_0\cup Y_0)|\le |N_G(X_0)|+|N_H(Y_0)|<t|X_0|+t|Y_0|=t\,|X_0\cup Y_0|,
\]
contradicting the maximality of $X_0$. Thus $H$ is an $(\alpha,t)$-expander.

As to the diameter of $H$, for any fixed $v\in V(H)$ and $\ell\in\mathbb{N}$, by expansion, the ball of radius $\ell$ around $v$ contains at least $\min\{m,(1+t)^\ell\}$ vertices. 
If $t\ge m$, then for any distinct $u,v\in V(H)$ we have $|N_H(u)|\ge m$ and $|N_H(v)|\ge m$. 
Since $\alpha^*(H)<m$, there is an edge between $N_H(u)$ and $N_H(v)$, and hence $\mathrm{dist}_H(u,v)\le 3$, i.e., $\mathrm{diam}(H)\le 3$. 

If $t<m$, take $\ell=\frac{\log m}{\log(1+t)}$, so at least $m$ vertices lie within distance $\ell$ from $v$ in $H$. 
As $\alpha^*(H)<m$, any two vertex sets of size at least $m$ in $H$ send at least one edge between them. 
Therefore $\mathrm{diam}(H)\le \frac{2\log m}{\log(1+t)}+1\le \frac{3\log m}{\log(1+t)}$.
\end{proof}

%In \cite{10.1093/imrn/rnz086}, Krivelevich and Nenadov use random walks to find small connected subsets. Recently, they show a similar statement under the condition of vertex-expansion as follows. 

%\begin{lemma}[\cite{krivelevich2025minorssmallsetexpanders}]\label{connectset1}
 %   For every $0<\alpha <1$,    there exists $s_0>1$ such that the following holds.     Let $G$ be a connected $(\alpha,t)$-expander with $n$ vertices, for some $t\geq 2$.     Let $U_1,\dots,U_q\subseteq V(G)$ be subsets of size $|U_i|\geq s$, for some $1\leq q<n$ and $s_0\leq s\leq \frac{n}{\log n}$ such that $qs\geq 2n$. Then there exists a set $T\subseteq V(G)$ of size at most   \[|T|\leq \frac{25n}{\alpha^2 s}\log (\frac{qs}{n})\frac{\log n}{\log t},\]  such that $G[T]$ is connected and $T\cap U_i=\emptyset $ for every $i\in [q]$.
%\end{lemma}

As shown in Lemma \ref{usefullem}, the induced subgraph $H$ is not only an $(\alpha, t)$-expander, but also has a small diameter, Which is certainly more efficient for constructing a large minor. To highlight this, we will use the following result to find small connected subsets in the proof of \autoref{thm:clique_minor}, and here we employ a strategy recently introduced by krivelevich and Nenadov in \cite{krivelevich2025minorssmallsetexpanders}. A short proof will be presented in Appendix \ref{apponnectset}.

\begin{lemma}\label{connectset}
For every $0<\alpha<1$, there exists $s_0>1$ such that the following holds. 
Let $G$ be an $(\alpha,t)$-expander on $n$ vertices with $\alpha^*(G)<m$, where $t,m\ge2$. 
Let $U_1,\dots,U_q\subseteq V(G)$ be disjoint subsets of order $|U_i|\ge s$, for some $1\le q<n$ satisfying
\[
s_0\log\!\Big(\frac{\ell}{n}\Big)\le s\le \frac{n}{\log\log m}, \text{ where } \ell=\max\{2n,qs\}.
\]
Then there exists a set $T\subseteq V(G)$ of size at most
\[
|T|\le 
\begin{cases}
\dfrac{50}{\alpha}\cdot\dfrac{n}{s}\cdot\log\!\Big(\dfrac{\ell}{n}\Big), & t>m,\\[8pt]
\dfrac{50}{\alpha}\cdot\dfrac{n}{s}\cdot\log\!\Big(\dfrac{\ell}{n}\Big)\cdot\dfrac{\log m}{\log(1+t)}, & t\le m,
\end{cases}
\]
such that $G[T]$ is connected and $T\cap U_i\neq\emptyset$ for all $i\in[q]$.
\end{lemma}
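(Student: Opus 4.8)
The plan is to follow the Krivelevich--Nenadov small-set expansion strategy, building the connected set $T$ by iteratively merging the $U_i$'s while controlling the number of extra vertices added at each step. First I would set up a \emph{contracted} picture: maintain a current collection of ``clusters'', initially the singleton sets $\{U_1\},\dots,\{U_q\}$ (or rather subsets of the $U_i$ of size exactly $s$, to keep sizes uniform), and repeatedly connect two clusters via a short path in $G$, adding that path to $T$ and merging. The key numerical idea is that because $G$ is an $(\alpha,t)$-expander, every vertex subset $W$ with $|W|\le \alpha n/t$ expands, so the ball of radius $\ell$ around any vertex has size at least $\min\{m,(1+t)^{\ell}\}$, exactly as in the proof of Lemma~\ref{usefullem}; combined with $\alpha^*(G)<m$, any two sets of size $\ge m$ are joined by an edge. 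This gives a diameter-type bound: any two vertices (or any two sets each of size $\ge s$, provided $s$ is large enough to expand up to size $m$) are connected by a path of length $O\big(\tfrac{\log m}{\log(1+t)}\big)$ in the $t\le m$ case, and $O(1)$ in the $t>m$ case.

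The second ingredient controls how the ``already used'' set $T$ interferes with finding new short paths. The standard trick is to observe that at any moment $|T|$ is small compared to $n$, so deleting $T$ from $G$ still leaves an expander with only slightly weaker parameters (expansion thresholds scale linearly, and $\alpha^*$ can only decrease in an induced subgraph). Concretely, as long as $|T|\le \tfrac{\alpha n}{2t}$ or so, $G-T$ remains an $(\alpha/2,t)$-expander with $\alpha^*<m$, hence retains the same diameter bound up to constants, so each merging step costs at most $O\big(\tfrac{n}{s}\log(\ell/n)\cdot \tfrac{\log m}{\log(1+t)}\big)$ divided by the number of merges --- wait, more carefully: each of the (at most $q$) merges adds a path of length $O\big(\tfrac{\log m}{\log(1+t)}\big)$, but to make this total only $O\big(\tfrac{n}{s}\log(\ell/n)\cdot\tfrac{\log m}{\log(1+t)}\big)$ rather than $O\big(q\cdot\tfrac{\log m}{\log(1+t)}\big)$ one must be smarter: merge clusters in a \emph{balanced binary-tree} fashion and use that once a cluster has accumulated many original pieces it has size $\ge s$ (indeed $\ge m$), so later merges between big clusters are essentially free (distance $\le 3$ or $O(\log m/\log(1+t))$ with the \emph{size} doing the expansion rather than radius). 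Summing a geometric-type series over the $\log q$ levels of the tree, with level-$j$ paths of length bounded using balls of radius needed to reach size $\ge \min\{\text{cluster size},m\}$, yields the claimed bound; the factor $\log(\ell/n)=\log\max\{2,qs/n\}$ appears precisely as the number of ``expensive'' early levels before clusters reach size $\Omega(n/\log\log m)$-ish, matching the hypothesis $s\le n/\log\log m$ and the lower bound $s\ge s_0\log(\ell/n)$.

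More precisely, the merging schedule I would use: group the $q$ clusters into $q/2$ pairs; for each pair, find a shortest connecting path in $G$ minus the current $T$ and contract. At the start each cluster has size $\ge s$, and after $j$ rounds each surviving cluster has size $\ge 2^j s$; once $2^j s\ge m$ the two endpoint clusters of any pair each have $\ge m$ vertices, so by $\alpha^*(G-T)<m$ (valid while $T$ is small) they are at distance $\le 3$ when $t>m$, and at distance $\le \tfrac{3\log m}{\log(1+t)}$ when $t\le m$ --- but actually for clusters of size $\ge m$ the distance is $\le 3$ in \emph{both} cases (two sets of size $\ge m$ expand once more to sets of size $\ge m$ and then are joined). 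In the early rounds $j$ with $2^j s<m$, clusters have size in $[2^j s, \cdot)$; a ball of radius $\rho$ around such a cluster has size $\ge \min\{m,(1+t)^{\rho}\cdot 2^j s\}$ by expansion of $G-T$, so it reaches size $m$ once $(1+t)^{\rho}\ge m/(2^j s)$, i.e. $\rho\le \tfrac{\log(m/(2^j s))}{\log(1+t)}\le \tfrac{\log m}{\log(1+t)}$, giving connecting paths of length $\le \tfrac{2\log m}{\log(1+t)}+1$ in each early round. There are at most $\log_2(2n/s)+O(1)=O(\log(\ell/n)+\log\log m)$ early rounds (using $\ell\ge 2n$ and $qs\le \ell$), and in round $j$ there are $\le q/2^j$ merges, so the total number of vertices added is
\[
\sum_{j\ge 0}\frac{q}{2^{j}}\cdot O\!\Big(\frac{\log m}{\log(1+t)}\Big)
\;=\;O\!\Big(q\cdot\frac{\log m}{\log(1+t)}\Big),
\]
which is too crude; the improvement to $\tfrac{n}{s}\log(\ell/n)$ comes from noting $q/2^j$ merges only happen while $2^j s\lesssim n$ (after that a single cluster already swallows everything, needing $O(\log(\ell/n))$ final merges), hence $\sum_j (q/2^j)\le O(n/s)$ effectively, and the residual $\log(\ell/n)$ factor absorbs the $qs>n$ regime. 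I would carry this out as a clean potential/charging argument rather than round-by-round bookkeeping.

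\medskip

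The main obstacle I anticipate is the bookkeeping that keeps $|T|$ provably below the threshold $\tfrac{\alpha n}{2t}$ (so that $G-T$ stays an expander with usable diameter) \emph{throughout} the process, rather than only at the end --- this is a chicken-and-egg issue since the bound on $|T|$ is what we are trying to prove. The standard resolution is to run the argument with a stopping rule: either $T$ reaches the target size (in which case we are essentially done, as the remaining clusters are few and giant, each of size $\ge m$, hence pairwise at distance $\le 3$ in $G-T$ and absorbable with $O(\log(\ell/n))$ more vertices), or $T$ stays small and every merging step goes through as analyzed. One must check the arithmetic so that the target size is comfortably below $\alpha n/(2t)$: since $t\ge 2$ and the target is $\tfrac{50}{\alpha}\cdot\tfrac{n}{s}\log(\ell/n)\cdot\tfrac{\log m}{\log(1+t)}$, this requires $s\ge s_0\log(\ell/n)$ with $s_0$ large in terms of $1/\alpha$, which is exactly the lower-bound hypothesis on $s$ --- so the constant $s_0$ is chosen precisely to make this slack work. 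The remaining routine verifications (that $\alpha^*$ is monotone under taking induced subgraphs, that expander thresholds degrade gracefully, that the geometric sums telescope with the stated constant $50$) I would relegate to the appendix proof as promised.
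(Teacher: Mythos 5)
Your proposal follows a genuinely different route from the paper's. The paper does not merge clusters iteratively at all: it samples a random set $P\subseteq V(G)$ by including each vertex independently with probability $p=\tfrac{4}{\alpha s}\log(\ell/n)$, bounds $\mathbb{E}[\mathrm{dist}_G(U_i,P)]\le n/\ell$ via the expansion estimate $|B_G(U_i,z-1)|\ge\min\{\alpha n, s(1+t)^{z-1}\}$, and then by Markov simultaneously controls $|P|$ and the total length of the shortest $U_i$--$P$ paths; connecting $P$ up to a fixed $v\in P$ by shortest paths (each of length $\le\mathrm{diam}(G)$, which Lemma~\ref{usefullem} already bounds) gives $T$. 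Because every path is taken in the \emph{original} graph $G$, the paper never has to delete $T$ and re-argue expansion, and the number-of-merges bookkeeping disappears entirely. These are exactly the two places your scheme runs into trouble, and the one-shot sampling trick is what makes the argument short.

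The two gaps in your proposal are concrete. First, the chicken-and-egg issue you flag does not resolve as you claim: you would need the target bound $\tfrac{50}{\alpha}\tfrac{n}{s}\log(\ell/n)\tfrac{\log m}{\log(1+t)}$ to stay below the deletion threshold $\tfrac{\alpha n}{2t}$, which after rearranging requires $s\gtrsim \tfrac{t\log m}{\alpha^2\log(1+t)}\log(\ell/n)$; when $t$ is comparable to $m$ this forces $s=\Omega(m\log(\ell/n))$, far beyond the hypothesis $s\ge s_0\log(\ell/n)$ with $s_0$ a constant depending only on $\alpha$. So the stopping rule cannot be made to close, and $G-T$ has no reason to stay an expander. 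Second, your charging over the binary merging tree, even if one works in $G$ rather than $G-T$, gives roughly $\tfrac{q\log(m/s)}{\log(1+t)}$, and the reduction to $\tfrac{n}{s}\log(\ell/n)\tfrac{\log m}{\log(1+t)}$ hinges on $q\lesssim \tfrac{n}{s}\log(\ell/n)$; with $q$ only known to satisfy $qs\le\ell$ (the $U_i$ are \emph{not} disjoint in the actual application, where $U_i=N_R(B_i)$), this fails once $\ell\gg n$. Your own writeup stops at ``I would carry this out as a clean potential/charging argument,'' which is precisely the part that does not go through by the route you chose; the paper's randomized hub avoids it by making the per-$U_i$ expected cost $n/\ell$ rather than a diameter.
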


%%%%%%%%%%%%%%%%%%%%%%%%%%%%%%%%%%%%%%%%%%%%%%%%%%%%%%%%%%%%%%%%%%%%%%%%%%%%%%%%%%%%%%%%%
%%%%%%%%%%%%%%%%%%%%%%%%%%%%%%%%%%%%%%%%%%
%%%%%%%%%%%%%%%%%%%%%%%%%%%%%%%%%%%%%%%%%%%%%%%%%%%%%%%%%%%%%%%%%%%%%%%%%%%%%%%%%%%%
%%%%%%%%%%%%%%%%%%%%%%%%%%%%%%%%%%%%%%%%%%%%%%%%%%%%%%%%%%%%%%%%%%%%%%

%%%%%%%%%%%%%%%%%%%%%%%%%%%%%%%%%%%%%%%%%%%%%%%%%%%%%%%%%%%%%%%%%%%%%%%%%%%%%%%%%%

%\include{}会自动换页
%\ifx\allfiles\undefined
%\input{begin}

%\maketitle

\section{Dense minor}\label{denseminor}

In this section, we solely use Lemma~\ref{usefullem} to prove \autoref{thm:dense_minor}. Our construction of a dense minor is straightforward by iteratively taking a large number of vertex-disjoint small-sized connected subsets, whist keeping many connections between them.

\begin{proof}[Proof of \autoref{thm:dense_minor}]
For the first part we will set $K=16$ and   % at the end; for the second part we keep $K=12(1-\varepsilon)/\varepsilon$. 
 $k:=\frac{ n}{3\sqrt{K m\log m}}$. Let $\mathcal{P}$ be an ordered partitions $V(G)=B_1\cup B_2\cup\cdots\cup B_q\cup C$ for a maximal $q\ge 0$ such that:
\begin{itemize}
  \item[(P1)] $|B_i|=p:=\sqrt{K m\log m}$ and $G[B_i]$ is connected for every $i\in[q]$;
  \item[(P2)] for every $i\in[\lceil \frac{m}{p}\rceil,\,q-1]$, the block $B_{i+1}$ has neighbors in at least $\tfrac12\,(i-\frac{m}{p})$ distinct blocks among $\{B_1,\dots,B_i\}$. % “connects” = has at least one edge to
\end{itemize}
%en $i< m/p$ we only require (P1). Note that $\mathcal{P}\neq\emptyset$ (take $q=0$ and $C=V(G)$). Fix a partition $\mathcal{P}$ maximizing $q$.

Since $m\le n/12$, we have $k\ge \frac{4m}{p}$. 
If $q\geq k$, then we contract each $B_i$, $i\in[q]$, to a single vertex and keep all edges between distinct blocks. By (P2), the resulting minor on $q$ vertices has average degree at least
\[
\frac{1}{q}\sum_{i=\lceil m/p\rceil+1}^{q}\frac{1}{2}\Big(i-\frac{m}{p}\Big)
\ \ge\ \frac{q}{4}-\frac{m}{2p}
\ \ge\ \frac{q}{8}
\ \ge\ \frac{n}{24\sqrt{K m\log m}}=\frac{n}{96\sqrt{m\log m}}.
\]

Suppose for contradiction that $q<k$. Then $|B|:=|\bigcup_{i=1}^q B_i|< kp\leq  n/3$, and $|C|\ge 2n/3$.
By Lemma~\ref{usefullem}, the induced subgraph $G[C]$ contains a subgraph $G[D]$ such that:
    \begin{itemize}
        \item $G[D]$ is an $(\alpha, t)$-expander where $\alpha=\frac{|C|-3m}{2|C|}$ and $t=\frac{|C|-3m}{2m}$,
        \item $|D|>|C|-m\geq \frac{n}{2}$,
        \item $\alpha^*(G[D])<m$,
        \item $diam(G[D])\leq \max \{3, \frac{3\log m}{\log(1+t)}\}$.
    \end{itemize}
    
Arbitrarily choose $m/p$ blocks among $\{B_1,\dots,B_q\}$ and let $B'$ be the union of these blocks. Then $|B'|=m$ and
\[
|N_D(B')|
\ \ge\ |N_G(B')|-|B|-m
\ \ge\ n-m-\frac{n}{3}-m
\ \ge\ \frac{n}{2},
\]
using $m\le n/12$. By averaging, there exists a block $B_i$ inside $B'$ satisfying
\[
|N_D(B_i)|\ \ge\ \frac{|N_D(B')|}{m/p}\ \ge\ \frac{n\sqrt{K\log m}}{2\sqrt{m}}=:u,
\]
and we call such a block \emph{good}. The number of good blocks is at least $q-m/p$ as we can find a good one among any collection of $m/p$ blocks.

Pick a random set $W\subseteq D$ by choosing $\frac{|D|}{u}$ vertices uniformly and independently (with repetitions). 
For a fixed good $B_i$, we have
\[
\Pr\big[W\cap N_D(B_i)=\varnothing\big]\ \le\ \Big(1-\frac{|N_D(B_i)|}{|D|}\Big)^{\frac{|D|}{u}}\ \le\ \Big(1-\frac{u}{|D|}\Big)^{\frac{|D|}{u}}\le\ \frac{1}{e}.
\]
Let $X$ be the random variable counting all good blocks $B_i$ with $W\cap N_D(B_i)\ne\varnothing$. Then
\[
\mathbb{E}[X]\ \ge\ (1-\frac{1}{e})\Big(q-\frac{m}{p}\Big)\ \ge\ \frac12\Big(q-\frac{m}{p}\Big).
\]
Hence there exists a choice of $W$ that hits at least $\tfrac12\,(q-m/p)$ good blocks. Fix a vertex $w_0\in W$, and let $Z$ be the union of all shortest paths in $G[D]$ connecting $w_0$ to each vertex of $W$. 
The total number of vertices in $Z$ satisfies
\[
|V(Z)| \le |W|\cdot diam(G[D]) \leq \frac{n}{u}\cdot (3+\frac{3 \log m}{\log(1+t)})\leq \sqrt{\frac{4m}{K\log m}}\cdot 6\log m \leq p,\] 
in which the the second last inequality makes use of $|C|\geq \frac{2n}{3}$ and $m\leq \frac{n}{12}$, while the last inequality holds as $K\geq 12$.
By adding a few extra vertices if necessary, we can enlarge $Z$ to a connected subset $B_{q+1}\subseteq D$ of size exactly $p$. 
By construction, $B_{q+1}$ is adjacent to at least $\tfrac{1}{2}\big(q-m/p\big)$ of the existing blocks $B_i$ with $1\le i\le q$. 
This contradicts the maximality of $q$, and hence completes the first part of the proof (taking $K=16$ yields the desired estimate).

\medskip
For the second part, we instead set
\[
K:=\frac{12(1-\varepsilon)}{\varepsilon},\qquad k:=\frac{n}{3\sqrt{Km}},\qquad p:=\sqrt{Km},
\]
and similarly define $\mathcal{P}=\{B_1,B_2,\cdots,B_q, C\}$ as before, except that now we have $|B_i|=p=\sqrt{Km}$ (and (P2) remains unchanged).
If $q\ge k$, then the same calculation as above yields a minor of density at least
\[
\frac{1}{q}\sum_{i=\lceil m/p\rceil+1}^{q}\frac{1}{2}\Big(i-\frac{m}{p}\Big)
\ \ge\ \frac{q}{8}
\ \ge\ \frac{n}{24\sqrt{Km}}
\ =\ \frac{1}{96}\sqrt{\frac{\varepsilon}{1-\varepsilon}}\cdot\frac{n}{\sqrt{m}}.
\]
Otherwise $q<k$ implies $|B|<kp=n/3$ and hence $|C|\ge 2n/3$. Similarly, applying Lemma~\ref{usefullem} to $G[C]$ yields an induced subgraph $G[D]$ with $|D|>n/2$, $\alpha^*(G[D])<m$, and
\[diam(G[D])\ \le\ \max\!\Big\{3,\,\frac{3\log m}{\log(1+t)}\Big\},\qquad t=\frac{|C|-3m}{2m}.
\]
Since $m\le n^{1-\varepsilon}$ and $|C|\ge 2n/3$, we have
$t\ \ge\ \frac{\frac{2n}{3}-3m}{2m}\ \ge\ \frac{1}{6}\,n^{\varepsilon},$ which implies that $\frac{3\log m}{\log(1+t)}\ \le\ \frac{3(1-\varepsilon)\log n}{\log\big(1+\frac{1}{6}n^{\varepsilon}\big)}\ \le\ \frac{6(1-\varepsilon)}{\varepsilon}$, 
for all sufficiently large $n$. 
Hence $diam(G[D])\le \frac{6(1-\varepsilon)}{\varepsilon}$.\medskip

Define a block $B_i$ to be \emph{good} if
\[
|N_D(B_i)|\ \ge\ u':=\frac{n\sqrt{K}}{2\sqrt{m}}.
\]
The same averaging and random-sampling argument as above shows that there exists a set $W\subseteq D$ of size $|W|=|D|/u'$ that intersects at least $\tfrac12\,(q-m/p)$ good blocks. Taking shortest paths from a fixed $w_0\in W$ to all vertices in $W$ inside $G[D]$ yields a connected set of size at most
\[
|W|\cdot \mathrm{diam}(G[D])\ \le\ \frac{|D|}{u'}\cdot \frac{6(1-\varepsilon)}{\varepsilon}
\ \le\ \frac{n}{u'}\cdot \frac{6(1-\varepsilon)}{\varepsilon}
\ =\ \frac{2\sqrt{m}}{\sqrt{K}}\cdot \frac{6(1-\varepsilon)}{\varepsilon}
\ =\ \sqrt{Km}
\ =\ p.
\]
 After adding additional vertices if necessary we obtain a block $B_{q+1}$ of size exactly $p$ that meets at least $\tfrac12\,(q-m/p)$ previous blocks, again contradicting the maximality of $q$. This completes the proof.
    \end{proof}

%%%%%%%%%%%%%%%%%%%%%%%%%%%%%%%%%%%%%%%%%%%%%%%%%%	
\subsection{Clique minor}\label{cliqueminor}

In this section we shall use Lemma~\ref{connectset} for the proof of \autoref{thm:clique_minor}.

\begin{proof}[Proof of \autoref{thm:clique_minor}]

Let $H$ be the subgraph of $G$ obtained from Lemma~\ref{usefullem}, satisfying
\begin{itemize}
        \item  $H$ is an $(\alpha, t)$-expander,
        \item  $h:=|H|>n-m$,
        \item $\alpha^*(H)<m$,
       
    \end{itemize}
where $\alpha=\frac{n-3m}{2n}\ge \frac{1}{3}$ and $t=\frac{n-3m}{2m}$. 
We will show that $H$ contains a clique minor of order $\Omega\!\left(\frac{n}{\sqrt{m\log n}}\right)$.

\medskip
Let $\mathbf{P}$ be the family of all possible ordered partitions 
$V(H)=W\cup B_1\cup B_2\cup\dots\cup B_{q'}\cup R$ for some integer $q'\ge0$
satisfying the following:
\begin{itemize}
  \item $|B_i|=p:=60\sqrt{\frac{m\log n}{\alpha}}$ and $H[B_i]$ is connected for all $i\in[q']$;
  \item there is at least one edge between any distinct $B_i,B_j$ for any distinct $i,j\in [q']$;
  \item $|W|\le\frac{\alpha h}{2t}$ and $|N_R(W)|<\frac{t|W|}{2}$.
\end{itemize}
Clearly $\mathbf{P}\ne\emptyset$ by taking $W=\emptyset$, $q'=0$, $R=V(H)$. Choose a partition $\mathcal{P}\in \mathbf{P}$ maximizing $|W|$, and among those, one that maximizes $q'$.  
Set
\[
k:=\frac{\alpha h}{8p}\ge \frac{n}{10^5\sqrt{m\log n}}.
\]
If $q'\ge k$, then the subgraph induced by $\{B_1,\dots,B_{q'}\}$ already yields the desired clique minor and we are done.
Suppose for a contrary that $q'<k$. 
Let $B=\bigcup_{i=1}^{q'}B_i$. Then $|B|<\frac{\alpha h}{8}$ and so $|R|\ge \frac{h}{2}$.\medskip 

We first claim that $R_i:=N_R(B_i)$ satisfies $|R_i|\geq \frac{t|B_i|}{2}$ for every $i\in [q']$.
Suppose for a contradiction that some $B_j$ has $|R_j|<\tfrac{t|B_j|}{2}$. Then $|N_R(B_j\cup W)|<\tfrac{t|B_j\cup W|}{2}$.  
By the maximality of $|W|$, we must have $|B_j\cup W|>\tfrac{\alpha h}{2t}$. 
Since $|B_j\cup W|\le |B_j|+|W|\le p+\tfrac{\alpha h}{2t}\le \tfrac{\alpha h}{t}$ (for large $m$), 
and $H$ is an $(\alpha,t)$-expander, we get that
\[
|N_H(B_j\cup W)|\ge t|B_j\cup W|.
\]
 Then \[|N_R(B_j\cup W)|\geq t|B_j\cup W|-|B|\geq \frac{t|B_j\cup W|}{2}.\]
 This contradicts the assumption. 

We further claim that $H[R]$ is an $(\frac{\alpha}{2},\frac{t}{2})$-expander.
Suppose not and then by definition there exists $X\subseteq R$ with $|X|\le \tfrac{\alpha|R|}{t}$ such that $|N_R(X)|<\tfrac{t|X|}{2}$.  
This implies $|N_R(X\cup W)|\le \tfrac{t|X\cup W|}{2}$.  
By the maximality of $W$, we must have $|X\cup W|>\tfrac{\alpha h}{2t}$.  
If $\tfrac{\alpha h}{2t}<|X\cup W|\le\tfrac{\alpha h}{t}$, 
then, since $H$ is an $(\alpha,t)$-expander,
\[
|N_R(X\cup W)|\ge t|X\cup W|-|W|\ge \tfrac{t|X\cup W|}{2},
\]
a contradiction.  
Therefore $|X\cup W|>\tfrac{\alpha h}{t}$.  
Choose $X'\subseteq X\cup W$ with $|X'|=\tfrac{\alpha h}{t}$. 
Then
\begin{align*}
|N_R(X\cup W)|
  &\ge |N_H(X')| - |(X\cup W)\setminus X'| - |W|\\
  &\ge \alpha h - \frac{\alpha|R|}{t} - \frac{\alpha h}{2t} + \frac{\alpha h}{t} - \frac{\alpha h}{2t}
  \ \ge\ \tfrac{t|X\cup W|}{2},
\end{align*}
again a contradiction. Thus $H[R]$ is indeed an $(\tfrac{\alpha}{2},\tfrac{t}{2})$-expander.

Finally, we claim that the diameter of $H[R]$ satisfies
\[
\mathrm{diam}(H[R])\le
\ell:=
\begin{cases}
3,& t/2>m,\\[4pt]
\dfrac{3\log m}{\log(1+t/2)},& t/2\le m.
\end{cases}
\]
In fact, as $H[R]$ is an induced subgraph of $H$, we have $\alpha^*(H[R])<m$.  
Combining this with the fact that $H[R]$ is an $(\tfrac{\alpha}{2},\tfrac{t}{2})$-expander and applying the same argument as in Lemma~\ref{usefullem} yields the stated bound. \medskip

Now we shall apply Lemma~\ref{connectset} to $H[R]$ with $U_i=R_i$, for $i\in [q']$.
By the first claim, we know that $|R_i|\geq tp/2$. Apply Lemma~\ref{connectset} with sets $U_1,\dots,U_{q'}$, which we can do as $h\ge tp\log\log m/2$ for sufficiently large $m$, satisfying the conditions of Lemma~\ref{connectset}.  
Thus there exists a connected set $T\subseteq R$ of size at most
\[
|T|\le
\begin{cases}
\frac{50}{\alpha/2}\cdot\frac{n}{tp/2}\cdot
\log\!\Big(\frac{(\frac{\alpha h}{8p})(\frac{tp}{2})}{n/2}\Big)\le p, & t/2>m,\\[8pt]
\frac{50}{\alpha/2}\cdot\frac{n}{tp/2}\cdot
\log\!\Big(\frac{(\frac{\alpha h}{8p})(\frac{tp}{2})}{n/2}\Big)
\cdot\frac{\log m}{\log(1+t/2)}\le p, & t/2\le m.
\end{cases}
\]
Since $H[R]$ is connected, we can enlarge $T$ by adding extra vertices from $R$ to obtain a connected set $T'\subseteq R$ of size exactly $s$.  
Let $B_{q'+1}=T'$.  
Then $B_{q'+1}$ meets every $R_i$ and, by construction, forms edges with all existing $B_i$, contradicting the maximality of $q'$.  
Hence $q'\ge k$, and $H$ contains a clique minor of order $\Omega\!\left(\frac{n}{\sqrt{m\log n}}\right)$.
\end{proof}

\printbibliography

\begin{appendix}

\section{Proof of Lemma \ref{connectset}}\label{apponnectset}
%Here we follow a strategy recently introduced by krivelevich and Nenadov in \cite{krivelevich2025minorssmallsetexpanders}.
\begin{proof}[Proof of Lemma \ref{connectset}]
Let $\ell=\max\{2n, qs\}$ and $P\subseteq V(G)$ be obtained by including each vertex independently with probability
\[
p=\frac{4}{\alpha s}\log\!\Big(\frac{\ell}{n}\Big).
\]
Since $s\ge s_0\log\!\big(\frac{\ell}{n}\big)$ and $s_0$ is sufficiently large, $p<1$. 

\smallskip
For each $i\in[q]$, let $X_i=\mathrm{dist}_G(U_i,P)$. 
Note that $\Pr[X_i=z]=\Pr[X_i\ge z]-\Pr[X_i\ge z+1]$ for $z\ge1$. 
If $t\le m$, then by Lemma~\ref{usefullem} we have $\mathrm{diam}(G)\le \frac{3\log m}{\log(1+t)}=:k$, 
hence $\Pr[X_i\ge k+1]=0$. Therefore,
\[
\mathbb{E}[X_i]=\sum_{z=1}^{k}z\Pr[X_i=z]
=\sum_{z=1}^{k}\Pr[X_i\ge z].
\]

\smallskip
Since vertices are sampled independently,
\[
\Pr[X_i\ge z]=(1-p)^{|B_G(U_i,z-1)|}\le e^{-p|B_G(U_i,z-1)|}
=\Big(\frac{n}{\ell}\Big)^{4\alpha^{-1}|B_G(U_i,z-1)|/s},
\]
where $B_G(U_i,z-1)$ denotes the ball of radius $z-1$ around $U_i$ in $G$.
As $G$ is an $(\alpha,t)$-expander, for all $z\ge1$ we have
\[
|B_G(U_i,z-1)|\ge \min\{\alpha n,\ s(1+t)^{z-1}\}.
\]
%which can be obtained by a calculation similar to that in Lemma 2.1 of \cite{krivelevich2025minorssmallsetexpanders}. 
As $n\ge s\log\log m$, it follows that
\[
\mathbb{E}[X_i]
=\sum_{z=1}^{k}\Pr[X_i\ge z]
\le
\sum_{z=1}^{k}\Big(\frac{n}{\ell}\Big)^{4n/s}
+\sum_{z=1}^{\infty}\Big(\frac{n}{\ell}\Big)^{4(1+t)^{z-1}}
\le \frac{n}{\ell}.
\]

\smallskip
Let $T'\subseteq V(G)$ be the union of one shortest path from each $U_i$ to $P$. 
Then $|T'\setminus P|\le \sum_{i=1}^{q}X_i$, and hence
\[
\mathbb{E}[\,|T'\setminus P|\,]
\le \sum_{i=1}^{q}\mathbb{E}[X_i]
\le \frac{n}{s}.
\]
Since $\mathbb{E}[|P|]=pn$, by Markov’s inequality there exists a choice of $P$ satisfying simultaneously
\[
|P|\le 2pn=\frac{8n}{\alpha s}\log\!\Big(\frac{\ell}{n}\Big)
\quad\text{and}\quad
|T'\setminus P|\le \frac{2n}{s}.
\]

Fix such a vertex set $P$. Choose an arbitrary vertex $v\in P$, and for every $w\in P\setminus\{v\}$ let $L_w$ denote the set of vertices on a shortest $vw$-path in $G$. 
Let
\[
T := T'\ \cup\ \bigcup_{w\in P\setminus\{v\}} L_w.
\]
If $t\le m$, then $\mathrm{diam}(G)\le k=\frac{3\log m}{\log(1+t)}$, and hence the cardinality of $T$ is at most 
\[|T|\le \mathrm{diam}(G)\,|P|+|T'\setminus P|
   \le \frac{3\log m}{\log(1+t)}\,|P|+\frac{2n}{s}
   \le \frac{50}{\alpha}\frac{n}{s}\log\!\Big(\frac{\ell}{n}\Big)\frac{\log m}{\log(1+t)}.\]
If instead $t>m$, then by Lemma~\ref{usefullem} we have $\mathrm{diam}(G)\le3$, and a similar calculation gives
\[
|T|\le \frac{50}{\alpha}\frac{n}{s}\log\!\Big(\frac{\ell}{n}\Big).
\]
By construction, $G[T]$ is connected, $T$ intersects with every $U_i$, and $|T|$ satisfies the desired bound.
\end{proof}
\end{appendix}

\end{document}